\def\ra{\rightarrow}
\newcommand\co{\colon} 
\newtheorem{theorem}{THEOREM}[section]
\newtheorem{proposition}[theorem]{Proposition}
\newtheorem{lemma}[theorem]{Lemma}
\newtheorem{conjecture}[theorem]{Conjecture}
\newtheorem{example}[theorem]{Example}
\theoremstyle{definition}
\theoremstyle{remark}
\newtheorem{remark}[theorem]{Remark}
\newcommand\CC{{\mathbb C}}
\newcommand\NN{{\mathbb N}}
\def\GL{\mathop{\rm GL}\nolimits}
\def\SL{\mathop{\rm SL}\nolimits}
\def\Sym{\mathop{\rm Sym}\nolimits}
\def\fm{{\mathfrak m}}
\def\Soc{\mathop{\rm Soc}\nolimits}
    \newcommand\cQ{\mathcal{Q}}
\def\blfootnote{\xdef\@thefnmark{}\@footnotetext}
\begin{document}

\title[On the contravariant arising from hypersurface singularities]{On the contravariant of homogeneous forms
\vspace{0.1cm}\\
arising from isolated hypersurface singularities}\blfootnote{{\bf Mathematics Subject Classification:} 14L24, 32S25.} \blfootnote{{\bf Keywords:} classical invariant theory, covariants and contravariants, associated forms.}
\author[Isaev]{Alexander Isaev}

\address{Mathematical Sciences Institute\\
Australian National University\\
Acton, ACT 2601, Australia}
\email{alexander.isaev@anu.edu.au}

\maketitle

\thispagestyle{empty}

\pagestyle{myheadings}

\begin{abstract} 
Let $\cQ_n^d$ be the vector space of homogeneous forms of degree $d\ge 3$ on $\CC^n$, with $n\ge 2$. The object of our study is the map $\Phi$, introduced in earlier articles by J. Alper, M. Eastwood and the author, that assigns to every form for which the discriminant $\Delta$ does not vanish the so-called associated form lying in the space $\cQ_n^{n(d-2)*}$. This map is a morphism from the affine variety $X_n^d:=\{f\in\cQ_n^d:\Delta(f)\ne 0\}$ to the affine space $\cQ_n^{n(d-2)*}$. Letting $p$ be the smallest integer such that the product $\Delta^p\Phi$ extends to a morphism from $\cQ_n^d$ to $\cQ_n^{n(d-2)*}$, one observes that the extended map defines a contravariant of forms in $\cQ_n^d$. In the present paper we obtain upper bounds for $p$ thus providing estimates for the contravariant's degree.
\end{abstract}

\section{Introduction}\label{intro}
\setcounter{equation}{0}

In this paper we discuss a curious connection between complex singularity theory and classical invariant theory proposed in \cite{EI1} and further explored in \cite{AI1}, \cite{AI2}, \cite{AIK}. What follows is written mainly for complex analysts and geometers, thus some of our expositional and notational choices may not be up to the taste of a reader with background in algebra, for which we apologize in advance.

Let ${\mathcal Q}_n^d$ be the vector space of homogeneous forms of degree $d$ on $\CC^n$, where $n\ge 2$, $d\ge 3$. Fix $f\in\cQ_n^d$ and consider the hypersurface $V_f:=\{z\in\CC^n\mid f(z)=0\}$. We will be interested in the situation when the singularity of $f$ at the origin is isolated, or, equivalently, when the discriminant $\Delta(f)$ of $f$ does not vanish. In this case, define $M_f:=\CC[z_1,\dots,z_n]/(f_{z_{{}_1}},\dots,f_{z_{{}_n}})$ to be the {\it Milnor algebra}\, of the singularity. Since the partial derivatives $f_{z_{{}_1}},\dots,f_{z_{{}_n}}$ have no common zeros away from the origin, the Nullstellensatz implies that the algebra $M_f$ is isomorphic to the local algebra $\CC[[z_1,\dots,z_n]]/(f_{z_{{}_1}},\dots,f_{z_{{}_n}})$. By the Mather-Yau theorem (see \cite{MY} and also \cite{Be}, \cite{Sh}, \cite[Theorem 2.26]{GLS}), the isomorphism class of $M_f$ determines the germ of the hypersurface $V_f$ at the origin up to biholomorphism, hence the form $f$ up to linear equivalence.

In fact, for a general isolated hypersurface singularity defined by (the germ of) a holomorphic function $F$, the Mather-Yau theorem states that, remarkably, the singularity is determined, up to biholomorphism, by $n$ and the isomorphism class of the {\it Tjurina algebra}\, $T_F:=\CC[[z_1,\dots,z_n]]/(F,F_{z_{{}_1}},\dots,F_{z_{{}_n}})$. The proof of the Mather-Yau theorem is not constructive, and it is an important open problem---called {\it the reconstruction problem}---to understand explicitly how the singularity is encoded in the corresponding Tjurina algebra. In this paper we concentrate on the {\it homogeneous}\, case as set out in the previous paragraph (notice that $T_f=M_f$). In this situation, the reconstruction problem was solved in \cite{IK}, where we proposed a simple algorithm for extracting the linear equivalence class of the form $f$ from the isomorphism class of $M_f$. An alternative (invariant-theoretic) approach to the reconstruction problem---which applies to the more general class of {\it quasihomogeneous}\, isolated hypersurface singularities---was developed in article \cite{EI1}, where we proposed a method for extracting certain numerical invariants of the singularity directly from its Milnor algebra (see also \cite{EI2}, \cite{I}). Already in the case of homogeneous singularities this approach leads to a curious concept that deserves attention regardless of the reconstruction problem and that is interesting from the purely classical invariant theory viewpoint. This concept is the focus of the present paper.

We will now briefly describe the idea behind it with details postponed until Section \ref{setup}. Let $\fm$ be the (unique) maximal ideal of $M_f$ and $\Soc(M_f)$ the socle of $M_f$, defined as $\Soc(M_f):=\{x\in M_f: x\,\fm=0\}$. It turns out that $M_f$ is a Gorenstein algebra, i.e., $\dim_{\CC}\Soc(M_f)=1$, and, moreover, that $\Soc(M_f)$ is spanned by the image $\widehat{H(f)}$ of the Hessian $H(f)$ of $f$ in $M_f$. Observing that $H(f)$ is an element of $\cQ_n^{n(d-2)}$, one can then introduce a form defined on the $n$-dimensional quotient $\fm/\fm^2$ with values in $\Soc(M_f)$ as follows:
$$
\fm/\fm^2 	 \to \Soc(M_f), \quad x  \mapsto y^{\,n(d-2)},
$$
where $y$ is any element of ${\mathfrak m}$ that projects to $x\in{\mathfrak m}/{\mathfrak m}^2$.  There is a canonical isomorphism ${\mathfrak m}/{\mathfrak m}^2\cong \CC^{n*}$ and, since $\widehat{H(f)}$ spans the socle, there is also a canonical isomorphism $\Soc(M_f) \cong \CC$. Hence, one obtains a form ${\mathbf f}$ of degree $n(d-2)$ on  $\CC^{n*}$ (which can be interpreted as an element of the dual space $\cQ_n^{n(d-2)*}$), called the {\it associated form}\, of $f$.

The principal object of our study is the morphism
$$
\Phi:X_n^d\to \cQ_n^{n(d-2)*},\quad f\mapsto{\mathbf f}
$$
of affine algebraic varieties, where $X_n^d$ is the variety of forms in $\cQ_n^d$ with nonzero discriminant. This map has a $\GL_n$-equivariance property (see Proposition \ref{equivariance}), and one of the reasons for our interest in $\Phi$ is the following intriguing conjecture proposed in \cite{AI1}, \cite{EI1}:

\begin{conjecture}\label{conj2} For every regular $\GL_n$-invariant function $S$ on $X_n^d$ there exists a rational $\GL_n$-invariant function $R$ on $\cQ_n^{n(d-2)*}$ defined at all points of the set\, $\Phi(X_n^d)\subset {\mathcal Q}_n^{n(d-2)*}$ such that $R\circ\Phi=S$.
\end{conjecture}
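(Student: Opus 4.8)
The plan is to reduce Conjecture \ref{conj2} to the assertion that the morphism $\Phi$ separates $\GL_n$-orbits of $X_n^d$, and then to combine the geometric invariant theory of $X_n^d$ with Rosenlicht's theorem on rational invariants.

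First I would analyze the source. Since $X_n^d$ is the principal affine open $\{\Delta\ne 0\}\subset\cQ_n^d$, every regular function on it is of the form $P/\Delta^k$ with $P\in\CC[\cQ_n^d]$; writing $\GL_n=\SL_n\cdot\CC^\times$ with the central torus acting by scaling, a $\GL_n$-invariant $S$ must in addition be $\CC^\times$-invariant, hence a ratio of two $\SL_n$-invariant forms of the same degree. Moreover, every $\GL_n$-orbit in $X_n^d$ is closed: forms of nonzero discriminant are stable for the $\SL_n$-action in the GIT sense, so their $\SL_n$-orbits are closed, and adjoining the free scaling keeps the orbit closed in $\cQ_n^d\setminus\{0\}$. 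Consequently $X_n^d/\!\!/\GL_n=\mathrm{Spec}\,\CC[X_n^d]^{\GL_n}$ is a genuine geometric quotient and $S$ descends to a regular function $\bar S$ on it.

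Next --- and this is the crux --- the equivariance of Proposition \ref{equivariance} shows that $\Phi$ sends $\GL_n$-orbits into $\GL_n$-orbits (the character only rescales the target, which a weight-zero $S$ cannot detect). I would try to prove that $\Phi$ does so \emph{injectively}: that $\mathbf f$ and $\mathbf g$ lie in the same $\GL_n$-orbit only if $f$ and $g$ do, i.e., that a form is determined up to linear equivalence by its associated form. This is exactly what is known for binary forms and in several low-degree cases from the cited work of Alper, Eastwood and the author, and I expect it to be the principal obstacle in general. Granting it, the induced morphism of generic quotients $X_n^d/\GL_n\to\cQ_n^{n(d-2)*}/\GL_n$ (Rosenlicht) is injective, hence, over $\CC$, birational onto its image, so $\bar S$ transports to a rational function on that image; lifting numerator and denominator from the image subvariety to the ambient quotient (and averaging by the Reynolds operator to keep invariance) yields a $\GL_n$-invariant rational function $R$ on $\cQ_n^{n(d-2)*}$ with $R\circ\Phi=S$ as rational functions on $X_n^d$, hence everywhere on $X_n^d$ by density.

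Finally I would address the regularity of $R$ at every point of $\Phi(X_n^d)$, which is a constructible, hence generically open and dense, subset of its closure by Chevalley's theorem. Writing $R=A/B$ with $A,B$ $\GL_n$-invariant homogeneous of equal degree, the identity $A\circ\Phi=S\cdot(B\circ\Phi)$ in $\CC[X_n^d]$ together with regularity of $S$ forces $\{B\circ\Phi=0\}\subseteq\{A\circ\Phi=0\}$ inside $X_n^d$; promoting this to an actual cancellation near each point of $\Phi(X_n^d)$ should use that $\Delta$ itself pulls back from a $\GL_n$-invariant rational function on $\cQ_n^{n(d-2)*}$ which is invertible along $\Phi(X_n^d)$ (equivalently, that the nonvanishing of $\Delta(f)$ is already encoded in $\mathbf f$), so that all unavoidable denominators of $R$ on $\Phi(X_n^d)$ are powers of that one function. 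I would expect this last step, together with the birationality bookkeeping of the previous paragraph, to be essentially routine once orbit-separation is in hand; the genuine difficulty --- and the reason the statement is still only a conjecture --- is the orbit-separation property of $\Phi$ itself, for which only partial results are presently available.
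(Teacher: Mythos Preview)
The statement you are attempting to prove is labeled a \emph{Conjecture} in the paper, and the paper does not supply a proof of it; it explicitly records that only the binary case and a weaker variant (in which $R$ is not required to be defined on all of $\Phi(X_n^d)$) are known. So there is no ``paper's own proof'' to compare your attempt against.

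Your proposal is not a proof either, and you say so yourself: the central step is the orbit-separation property of $\Phi$, which you grant rather than establish. Two further remarks are in order. First, your reduction overstates what orbit separation buys. The paper distinguishes the full conjecture from its weaker variant precisely by the requirement that $R$ be regular at \emph{every} point of $\Phi(X_n^d)$; since the weaker variant is already a theorem (cited as \cite{AI1}), the genuine remaining content of the conjecture is exactly the step you call ``essentially routine once orbit-separation is in hand.'' It is not routine: an injective $\GL_n$-equivariant morphism of varieties need not have the property that every regular invariant on the source descends to a rational invariant on the target that is defined on the whole image, and your sketch (cancellation of $A,B$ along $\Phi(X_n^d)$, or the claim that $\Delta$ is the pullback of an invariant rational function invertible on $\Phi(X_n^d)$) begs the question. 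Second, even the birationality step is not automatic: injectivity of a morphism over $\CC$ onto a constructible image does not by itself give birationality onto the Zariski closure without further argument (one needs, e.g., that the map is generically unramified or that the image is irreducible of the right dimension), and Rosenlicht only provides rational quotients on invariant open subsets, not on all of $\cQ_n^{n(d-2)*}$.

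In short: you have correctly located the two obstacles (orbit separation and control of the denominator of $R$ along the full image), but you have proved neither, and the second is where the open content of the conjecture actually lies.
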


Observe that, if settled, Conjecture \ref{conj2} would imply an invariant-theoretic solution to the reconstruction problem in the homogeneous case. Indeed, on the one hand, regular $\GL_n$-invariant functions on $X_n^d$ are easily seen to separate the $\GL_n$-orbits (see, e.g., \cite[Proposition 3.1]{EI1}), and, on the other hand, the result of the evaluation of any rational $\GL_n$-invariant function on the associated form ${\mathbf f}$ depends only on the isomorphism class of $M_f$. Thus, the conjecture would yield a complete system of biholomorphic invariants of homogeneous isolated hypersurface singularities constructed from the algebra $M_f$ alone. So far, Conjecture \ref{conj2} has been confirmed for binary forms (see \cite{EI1}, \cite{AI2}), and its weaker variant (which does not require that the function $R$ be defined on the entire image of $\Phi$) has been established for all $n$ and $d$ (see \cite{AI1}).

The conjecture is also rather interesting from the purely invariant-theoretic point of view. Indeed, if settled, it would imply that the invariant theory of forms in ${\mathcal Q}_n^d$ can be extracted, by way of the morphism $\Phi$, from that of forms in ${\mathcal Q}_n^{n(d-2)*}$ at least at the level of rational invariant functions, or {\it absolute invariants}. 

The main goals of the present paper are twofold. Firstly, we would like to draw the attention of the complex-analytic audience to the concept of the associated form and the curious connection between complex geometry and invariant theory manifested through it as explained above and in more detail in Section \ref{setup}. Secondly, we look at another aspect of associated forms, which is not directly related to Conjecture \ref{conj2} but nevertheless contributes to classical invariant theory. Namely, letting $p$ be the smallest integer such that the product $\Delta^p\Phi$ extends to a morphism from $\cQ_n^d$ to $\cQ_n^{n(d-2)*}$, by utilizing the equivariance property of $\Phi$ one observes that this product defines a {\it contravariant}\, of degree $np(d-1)^{n-1}-n$ of forms in $\cQ_n^d$ (see Section \ref{S:contravariant} for details). It appears that for general $n$ and $d$ this contravariant is new and had not been studied prior to our recent work \cite{AIK} where it was first introduced and related to known contravariants in the cases of binary quartics, binary quintics and ternary cubics. 

One of the first natural questions arising while investigating $\Delta^p\Phi$ is that of determining or at least estimating the integer $p$ and hence the contravariant's degree. In \cite{AIK} we showed that $p=1$ for $n=2,3$. In the present paper we find an upper bound for $p$ that works for all $n$ (see Theorem \ref{main}). If $n=2,3$, this bounds yields $p=1$ as expected while in general it is not sharp as evidenced by our more precise estimates obtained for $n=4, 5$ in Propositions \ref{n=4}, \ref{n=5}. 

{\bf Acknowledgements.} This work is supported by the Australian Research Council.

\section{Preliminaries on associated forms}\label{setup}
\setcounter{equation}{0}

In this section we provide an introduction to associated forms and their properties that requires only a minimal background in algebra, for which we include detailed references.

Let ${\mathcal Q}_n^d := \Sym^d (\CC^{n*})$ be the vector space of homogeneous forms of degree $d$ on $\CC^n$ where $n\ge 2$. Its dimension is given by the well-known formula
\begin{equation}
\dim_{\CC}{\mathcal Q}_n^d=\left(
\begin{array}{c}
d+n-1\\
d
\end{array}
\right).\label{dimform}
\end{equation}
The standard action of $\GL_n:=\GL_n(\CC)$ on $\CC^n$ induces an action on ${\mathcal Q}_n^d$ as follows:
$$
(Cf)(z):=f\left(z\cdot C^{-t}\right)
$$
for $C\in\GL_n$, $f\in{\mathcal Q}_n^d$ and $z=(z_1,\dots,z_n)\in\CC^n$. Two forms that lie in the same $\GL_n$-orbit are called linearly equivalent. 

To every nonzero $f\in{\mathcal Q}_n^d$ we associate the hypersurface
$$
V_f:=\{z\in\CC^n\mid f(z)=0\}
$$
and consider it as a complex space with the structure sheaf induced by $f$. The singular set of $V_f$ is then the critical set of $f$. In particular, if $d\ge 2$ the hypersurface $V_f$ has a singularity at the origin. We are interested in the situation when this singularity is isolated, or, equivalently, when $V_f$ is smooth away from 0. This occurs if and only if $f$ is nondegenerate, i.e., $\Delta(f)\ne 0$, where $\Delta$ is the discriminant (see \cite[Chapter 13]{GKZ}). 

For $d\ge 3$ define
$$
X^d_n:=\{f\in{\mathcal Q}_n^d \mid \Delta(f)\ne 0\}.
$$
Observe that $\GL_n$ acts on the affine variety $X_n^d$ and note that every $f\in X_n^d$ is stable with respect to this action, i.e., the orbit of $f$ is closed in $X_n^d$ and has dimension $n^2$  (see, e.g., \cite[Corollary 5.24]{Mu}). It then follows by standard Geometric Invariant Theory arguments (see, e.g., \cite[Proposition 3.1]{EI1}) that regular invariant functions on $X_n^d$ separate the $\GL_n$-orbits. As explained in the introduction, this is one of the facts that links Conjecture \ref{conj2} with the reconstruction problem arising from the Mather-Yau theorem.  

Fix $f\in X^d_n$ and consider the Milnor algebra of the singularity\ of $V_f$, which is the complex local algebra
$$
M_f:=\CC[[z_1,\dots,z_n]]/(f_1,\dots,f_n),
$$
where $\CC[[z_1,\dots,z_n]]$ is the algebra of formal power series in $z_1,\dots,z_n$ with complex coefficients and $f_j:=\partial f/\partial z_j$, $j=1,\dots,n$. Since the singularity of $V_f$ is isolated, the algebra $M_f$ is Artinian, i.e., $\dim_{\CC}M_f<\infty$ (see \cite[Proposition 1.70]{GLS}). Therefore, $f_1,\dots,f_n$ is a system of parameters in $\CC[[z_1,\dots,z_n]]$, and since $\CC[[z_1,\dots,z_n]]$ is a regular local ring, $f_1,\dots,f_n$ is a regular sequence in $\CC[[z_1,\dots,z_n]]$. This yields that $M_f$ is a complete intersection (see \cite[\S 21]{Ma}).

It is convenient to utilize another realization of the Milnor algebra. Namely, by the Nullstellensatz, the algebra $\CC[z_1,\dots,z_n]/(f_1,\dots,f_n)$ is local, and it is easy to see that it is isomorphic to $M_f$. Thus, we can write  
$$
M_f=\CC[z_1,\dots,z_n]/(f_1,\dots,f_n).
$$
Let ${\mathfrak m}$ denote the maximal ideal of $M_f$, which consists of all elements represented by polynomials in $\CC[z_1,\dots,z_n]$ vanishing at the origin. By Nakayama's lemma, the maximal ideal is nilpotent and we let $\nu:=\max\{\eta\in\NN\mid {\mathfrak m}^{\eta}\ne 0\}$ be the socle degree of $M_f$.

Since $M_f$ is a complete intersection, by \cite{Ba} it is a Gorenstein algebra. This means that the socle of $M_f$, defined as
$$
\Soc(M_f):=\{x\in M_f \mid x\,{\mathfrak m}=0\},
$$
is a one-dimensional vector space over $\CC$  (see, e.g., \cite[Theorem 5.3]{H}). We then have $\Soc(M_f)={\mathfrak m}^{\nu}$. Furthermore, $\Soc(M_f)$ is spanned by the element $\widehat{H(f)}$ of $M_f$ represented by the Hessian $H(f)$ of $f$ (see, e.g., \cite[Lemma 3.3]{Sa}). Since $H(f)$ is a form in ${\mathcal Q}_n^{n(d-2)}$, it follows that $\nu=n(d-2)$. Thus, the subspace 
\begin{equation}
W_f:={\mathcal Q}_n^{n(d-2)-(d-1)}f_1+\dots+{\mathcal Q}_n^{n(d-2)-(d-1)}f_n\subset{\mathcal Q}_n^{n(d-2)}\label{subspace}
\end{equation}
has codimension 1, with the line spanned by $H(f)$ being complementary to it.

Let $e_1^*=z_1,\dots,e_n^*=z_n$ be the basis in $\CC^{n*}$ dual to the standard basis $e_1,\dots,e_n$ in $\CC^n$ and $z_1^*,\dots,z_n^*$ the coordinates of a vector $z^*\in\CC^{n*}$ (we slightly abuse notation by writing $z^*=(z_1^*,\dots,z_n^*)$). Denote by $\omega \co \Soc(M_f)\ra\CC$ the linear isomorphism given by the condition $\omega(\widehat{H(f)})=1$. Define a form ${\mathbf f}$ of degree $n(d-2)$ on $\CC^{n*}$ (i.e., an element of $\Sym^{n(d-2)} (\CC^n)$) by the formula
$$
{\mathbf f}(z^*):=\omega\left((z_1^*\widehat{z}_1+\dots+z_n^*\widehat{z}_n)^{n(d-2)}\right),\label{assocformdef}
$$
where $\widehat{z}_j$ is the element of the algebra $M_f$ represented by the coordinate function $z_j\in\CC[z_1,\dots,z_n]$. We call ${\mathbf f}$ the {\it associated form}\, of $f$.

\begin{example}\label{E:example1} \rm If $f = a_1 z_1^d + \cdots + a_n z_n^d$ for nonzero $a_i \in \CC$, then one computes $H§ (f) = (a_1 \cdots a_n)(d(d-1))^n (z_1 \cdots z_n)^{d-2}$ and
$$
{\mathbf f}(z^*) = \frac{1}{a_1 \cdots a_n} \frac{(n(d-2))!}{(d!)^n} (z_1^* \cdots z_n^*)^{d-2}.
$$
\end{example}

To calculate ${\mathbf f}$ in general, notice that if  $i_1,\dots,i_n$ are nonnegative integers such that $i_1+\dots+i_n=n(d-2)$, the product $\widehat{z}_1^{i_1}\cdots \widehat{z}_n^{i_n}$ lies in $\Soc(M_f)$, hence we have 
\begin{equation}
\widehat{z}_1^{i_1}\cdots \widehat{z}_n^{i_n}=\mu_{i_1,\dots,i_n}(f) \widehat{H(f)}\label{assocformexpppp}
\end{equation}
for some $\mu_{i_1,\dots,i_n}(f)\in\CC$. In terms of the coefficients $\mu_{i_1,\dots,i_n}(f)$ the form ${\mathbf f}$ is written as 
\begin{equation}
{\mathbf f}(z^*)=\sum_{i_1+\cdots+i_n=n(d-2)}\frac{(n(d-2))!}{i_1!\cdots i_n!}\mu_{i_1,\dots,i_n}(f)
z_1^{* i_1}\cdots z_n^{* i_n}.\label{assocformexpp}
\end{equation}

We will now show:

\begin{proposition}\label{regular} 
Each $\mu_{i_1,\dots,i_n}$ is a regular function on the affine variety $X_n^d$.
\end{proposition}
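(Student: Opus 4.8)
The plan is to realize each $\mu_{i_1,\dots,i_n}$ as the result of solving a linear system whose coefficients depend polynomially on $f$, with the relevant determinant being a nonvanishing power of the discriminant on $X_n^d$. First I would fix a monomial basis of $\cQ_n^{n(d-2)}$ and recall from \eqref{subspace} that $W_f$ has codimension one with $H(f)$ spanning a complement. Choosing a spanning set of $W_f$ of the form $\{m_\alpha f_j\}$, where $m_\alpha$ ranges over a basis of $\cQ_n^{n(d-2)-(d-1)}$ and $j=1,\dots,n$, together with $H(f)$, gives a (generally overdetermined, but full-rank-one-short-of-everything) description of $\cQ_n^{n(d-2)}$. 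The key point is that $\widehat{z}_1^{i_1}\cdots\widehat{z}_n^{i_n}$ equals $\mu_{i_1,\dots,i_n}(f)\,\widehat{H(f)}$ in $M_f$, which means precisely that the monomial $z_1^{i_1}\cdots z_n^{i_n}$, minus $\mu_{i_1,\dots,i_n}(f)\,H(f)$, lies in $W_f$. So $\mu_{i_1,\dots,i_n}(f)$ is the unique scalar making this difference expressible as $\sum_{\alpha,j}c_{\alpha j}\,m_\alpha f_j$ for some $c_{\alpha j}$.

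Next I would set this up as an honest square linear system by Cramer's rule. The issue is that the natural spanning set of $W_f$ may be linearly dependent, so I cannot blindly invert. To get around this, I would argue as follows: the map sending $(c_{\alpha j})_{\alpha,j}$ and $\mu$ to $\sum c_{\alpha j} m_\alpha f_j + \mu H(f) \in \cQ_n^{n(d-2)}$ is surjective for every $f\in X_n^d$, because $W_f$ has codimension one and $H(f)$ is a complement. Hence one can extract a square invertible submatrix; but the choice of submatrix depends on $f$, which is awkward for global regularity. The cleaner route is to localize: on the open set where a fixed maximal minor of the coefficient matrix is nonzero, Cramer's rule expresses $\mu_{i_1,\dots,i_n}$ as a ratio of two polynomials in the entries of $f$ (the entries being, up to combinatorial constants, the coefficients of $f$ and their derivatives, hence polynomial in $f$), with nonvanishing denominator; so $\mu_{i_1,\dots,i_n}$ is regular there. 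Since $X_n^d$ is covered by such open sets as the minor varies, and the function $\mu_{i_1,\dots,i_n}$ is well-defined globally, the local regular pieces glue to a regular function on all of $X_n^d$. Equivalently, one can phrase this without a cover: $\mu_{i_1,\dots,i_n}$ is a rational function on $\cQ_n^d$ regular wherever any one maximal minor survives, and the union of these loci is exactly $X_n^d$.

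The main obstacle is the bookkeeping in the previous paragraph: one must check that the overdetermined system really does have the column corresponding to $H(f)$ linearly independent from the $W_f$-columns for every $f\in X_n^d$ (this is \eqref{subspace}, so it is given), and that, for each $f\in X_n^d$, at least one maximal minor of the coefficient matrix is nonzero — this again follows from \eqref{subspace} since the columns span $\cQ_n^{n(d-2)}$. With those two facts in hand the argument is routine: regularity is a local property, Cramer's rule gives regularity on each chart, and the charts cover $X_n^d$. I would also remark that one can be slightly more explicit and note that the minors in question are, up to constants, expressible in terms of the discriminant — but since the statement only claims regularity on $X_n^d$, not an identification of the denominator with a power of $\Delta$, the covering argument suffices and I would not pursue the sharper identification here.
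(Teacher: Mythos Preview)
Your proposal is correct and follows essentially the same route as the paper: set up the linear system expressing $z_1^{i_1}\cdots z_n^{i_n}$ in terms of the spanning set $\{m_\alpha f_j\}$ of $W_f$ together with $H(f)$, extract a square subsystem (the paper phrases this as choosing a basis of $W_f$ from among the $f_r m_s$ whose indices are fixed on a Zariski open set $U$), apply Cramer's rule to get $\mu_{i_1,\dots,i_n}$ as a ratio of polynomials with nonvanishing denominator on $U$, and cover $X_n^d$ by such opens. Your closing remark that the minors are ``up to constants, expressible in terms of the discriminant'' is not quite right---individual minors can vanish on parts of $X_n^d$---but since you correctly note this is not needed for the statement, it does no harm.
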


\begin{proof} Recall that for $f\in X_n^d$ the subspace $W_f$ introduced in (\ref{subspace}) has codimension 1, and the line spanned by the Hessian $H(f)$ is complementary to it in the vector space $\cQ_n^{n(d-2)}$. Let $K:=\dim_{\CC}{\mathcal Q}_n^{n(d-2)-(d-1)}$ and ${\tt m}_1,\dots,{\tt m}_K$ be the standard monomial basis in ${\mathcal Q}_n^{n(d-2)-(d-1)}$. Then $W_f$ is spanned by the products $f_r\,{\tt m}_s$, $r=1,\dots,n$, $j=s,\dots,K$. Choose a basis ${\mathbf e}_k(f):= f_{r_{{}_k}}\,{\tt m}_{s_{{}_k}}$ in $W_f$, with $k=1,\dots,N-1$, where $N:=\dim_{\CC}\cQ_n^{n(d-2)}$. We note that the indices $r_k$, $s_k$ can be assumed to be independent of the form $f$ if it varies in some Zariski open subset $U$ of $X_n^d$, and from now on we assume that this is the case. Then for every $g\in\cQ_n^{n(d-2)}$ there are $\alpha_k(f,g)\in\CC$, with $k=1,\dots,N-1$, and $\gamma(f,g)\in\CC$ such that
\begin{equation}
\alpha_1(f,g){\mathbf e}_1(f)+\dots+\alpha_{N-1}(f,g){\mathbf e}_{N-1}(f)+\gamma(f,g) H(f)=g.\label{keyeq}
\end{equation}
Notice that $\gamma(f,z_1^{i_1}\dots z_n^{i_n})=\mu_{i_1,\dots,i_n}(f)$ for $i_1+\dots+i_n=n(d-2)$ (see (\ref{assocformexpppp})).

We now expand both sides of (\ref{keyeq}) with respect to the standard monomial basis of $\cQ_n^{n(d-2)}$. As a result, we obtain a linear system of $N$ equations with the $N$ unknowns $\alpha_k(f,g)$, $\gamma(f,g)$, $k=1,\dots,N-1$. Let $A(f)$ be the matrix of this system and $D(f):=\det A(f)$. Clearly, the entries of the matrix $A(f)$ are polynomials in the coefficients of the form $f$. Furthermore, since for every $f$ and $g$ the system has a solution, $D$ does not vanish on $U$, and the solution can be found by applying Cramer's rule. 

This implies, in particular, that on the set $U$ the function $\mu_{i_1,\dots,i_n}$ is a ratio of two polynomials in the coefficients of $f$ with nonvanishing denominator. As $X_n^d$ admits an open cover by sets with this property, it follows that $\mu_{i_1,\dots,i_n}$ is a regular function on $X_n^d$. This completes the proof.\end{proof}

\begin{remark}\label{alsomain}
Theorem \ref{main} in the next section is obtained by extending the above argument and involves a further analysis of the denominator of $\mu_{i_1,\dots,i_n}$. 
\end{remark}

By Proposition \ref{regular} we have 
\begin{equation}
\mu_{i_1,\dots,i_n}=\frac{P_{i_1,\dots,i_n}}{\Delta^{p_{i_1,\dots,i_n}}}\label{formulaformus}
\end{equation}
for some $P_{i_1,\dots,i_n}\in\CC[\cQ_n^d]$ and nonnegative integer $p_{i_1,\dots,i_n}$ (here and in what follows for any affine variety $X$ over $\CC$ we denote by $\CC[X]$ its coordinate ring, which coincides with the ring ${\mathcal O}_X(X)$ of all regular functions on $X$). To obtain representation (\ref{formulaformus}), recall that for any irreducible affine variety $X$ and a regular non-zero function $h$ on $X$, any regular function on the affine open subset $\{x\in X\mid h(x)\ne 0\}$ is the ratio of a regular function on $X$ and a non-negative power of $h$ (see, e.g., \cite[Proposition 1.40]{GW}). 

Thus, we have arrived at the morphism
$$
\Phi \co X_n^d\ra \Sym^{n(d-2)} (\CC^n) ,\quad f\mapsto {\mathbf f}
$$
of affine algebraic varieties. 

Next, recall that for any $k\ge 1$ the polar pairing between the spaces $\Sym^{k}(\CC^n)$ and $\Sym^{k} (\CC^{n*})$ is given as follows. Any $P\in\Sym^{k}(\CC^n)$ is a form of degree $k$ in $e_1,\dots,e_n$, and any  $Q\in\Sym^{k} (\CC^{n*})$ is a form of degree $k$ in $e_1^*,\dots,e_n^*$, i.e., a form of degree $k$ in the variables $z_1,\dots,z_n$. Then the polar pairing between $\Sym^{k}(\CC^n)$ and $\Sym^{k} (\CC^{n*})$ is defined by
$$
\begin{array}{l}
\Sym^{k}(\CC^n)\times\Sym^{k} (\CC^{n*})\to\CC,\\
\vspace{-0.1cm}\\
(P(e_1,\dots,e_n),Q(z_1,\dots,z_n))\mapsto P\left(\partial/\partial z_1,\dots,\partial/\partial z_n\right)(Q).
\end{array}\label{polarpairing}
$$
This pairing is nondegenerate and therefore yields a canonical identification between the spaces $\Sym^{k} (\CC^n)$ and $(\Sym^{k} (\CC^{n*}))^*=\cQ_n^{k*}$ (see, e.g., \cite[Section 1.1.1]{D} for details). Using this identification, from now on we will regard the associated form as an element of ${\mathcal Q}_n^{n(d-2)*}$ and $\Phi$ as a morphism from $X_n^d$ to ${\mathcal Q}_n^{n(d-2)*}$.

The map $\Phi$ is rather natural; in particular, \cite[Proposition 2.1]{AI1} implies an equivariance property for $\Phi$. Namely, introducing an action of $\GL_n$ on the dual space ${\mathcal Q}_n^{n(d-2)*}$ in the usual way as
$$
(Cg)(h):=g(C^{-1}h),\quad g\in \cQ_n^{n(d-2)*},\, h\in {\mathcal Q}_n^{n(d-2)},\,C\in\GL_n,
$$
we have:
  
\begin{proposition}\label{equivariance} For every $f\in X_n^d$ and $C\in\GL_n$ the following holds:
$$
\Phi(Cf)=(\det C)^2\,\Bigl(C\Phi(f)\Bigr).
$$
In particular, the morphism $\Phi$ is $\SL_n$-equivariant. 
\end{proposition}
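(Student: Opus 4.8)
The plan is to follow the construction of the associated form through a linear change of coordinates; the factor $(\det C)^2$ will come out solely from the transformation law of the Hessian. Fix $C\in\GL_n$ and $f\in X_n^d$; since $\GL_n$ preserves $X_n^d$, we also have $Cf\in X_n^d$. First I would observe that the substitution $z_j\mapsto\sum_k(C^{-1})_{jk}z_k$ defines an algebra automorphism $\sigma$ of $\CC[z_1,\dots,z_n]$ with $\sigma(f)=Cf$, and that, by the chain rule, $\sigma$ carries the Jacobian ideal $(f_1,\dots,f_n)$ \emph{onto} $\bigl((Cf)_1,\dots,(Cf)_n\bigr)$ --- invertibility of $C$ is precisely what forces the two ideals to be equal rather than one merely contained in the other. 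Hence $\sigma$ descends to a $\CC$-algebra isomorphism $\bar\sigma\co M_f\ra M_{Cf}$ which preserves the maximal ideal and therefore restricts to an isomorphism $\Soc(M_f)\ra\Soc(M_{Cf})$.

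Next I would track the two ingredients of the associated form under $\bar\sigma$. Differentiating twice, the Hessian matrix of $Cf$ at $z$ equals $(C^{-1})^{t}$ times the Hessian matrix of $f$ at $\sigma(z)$ times $C^{-1}$, so $H(Cf)=(\det C)^{-2}\,\sigma\bigl(H(f)\bigr)$ and hence $\bar\sigma\bigl(\widehat{H(f)}\bigr)=(\det C)^{2}\,\widehat{H(Cf)}$; comparing this with the normalizations $\omega_f(\widehat{H(f)})=1$ and $\omega_{Cf}(\widehat{H(Cf)})=1$ gives $\omega_{Cf}\circ\bar\sigma=(\det C)^{2}\,\omega_f$ on $\Soc(M_f)$. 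On the other hand $\bar\sigma(\widehat z_k)=\sum_l(C^{-1})_{kl}\,\widehat z_l$ (residue classes taken in the appropriate algebra), which rearranges to $\sum_j z^*_j\widehat z_j=\bar\sigma\bigl(\sum_k(z^*C)_k\,\widehat z_k\bigr)$, where $z^*C$ is the row vector obtained from $z^*=(z^*_1,\dots,z^*_n)$ by right multiplication by $C$. Raising this to the power $n(d-2)$, applying the algebra homomorphism $\bar\sigma$ and then $\omega_{Cf}$, and using the previous identity, I would obtain
\[
\Phi(Cf)(z^*)=(\det C)^{2}\,{\mathbf f}(z^*C).
\]

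It remains to read this off in terms of the $\GL_n$-actions. The map $z^*\mapsto z^*C$ is exactly the natural action of $C$ on forms on $\CC^{n*}$, i.e.\ on $\Sym^{n(d-2)}(\CC^n)$; since the polar pairing recalled above is $\GL_n$-invariant, the identification $\Sym^{n(d-2)}(\CC^n)\cong\cQ_n^{n(d-2)*}$ is equivariant for this action and the action $(Cg)(h)=g(C^{-1}h)$ on $\cQ_n^{n(d-2)*}$ introduced just before the statement. Hence the displayed identity becomes $\Phi(Cf)=(\det C)^2\bigl(C\,\Phi(f)\bigr)$, and specializing to $C\in\SL_n$ gives the asserted equivariance; this overlaps with the content of \cite[Proposition 2.1]{AI1}. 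I do not expect a genuine obstacle here: the argument is largely bookkeeping --- keeping the transposes, inverses and left/right conventions straight, and checking the equivariance of the polar-pairing identification --- the one genuinely substantive input being the transformation law of the Hessian, which is what produces the factor $(\det C)^2$.
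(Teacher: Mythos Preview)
Your argument is correct: the linear substitution induces an algebra isomorphism $\bar\sigma\co M_f\to M_{Cf}$, the Hessian transformation law gives $\bar\sigma(\widehat{H(f)})=(\det C)^2\widehat{H(Cf)}$ and hence $\omega_{Cf}\circ\bar\sigma=(\det C)^2\omega_f$, and the bookkeeping with $z^*\mapsto z^*C$ together with the $\GL_n$-equivariance of the polar pairing finishes the job. The paper itself does not give an in-text proof of this proposition but simply invokes \cite[Proposition~2.1]{AI1}; your computation is precisely the content behind that citation (as you acknowledge), so there is no divergence in approach to discuss.
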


Finally, we note that the associated form of $f\in X_n^d$ arises from the following invariantly defined map
$$
\fm/\fm^2 \to \Soc(M_f),\quad x \mapsto y^{n(d-2)},\label{coordinatefree}
$$
with $y\in\fm$ being any element that projects to $x\in\fm/\fm^2$. Indeed, ${\mathbf f}$ is derived from this map by identifying the target with $\CC$ via $\omega$ and the source with $\CC^{n*}$ by mapping the image of $\widehat{z}_j$ in $\fm/\fm^2$ to $e_j^*$, $j=1,\dots,n$. It then follows that for any rational $\GL_n$-invariant function $R$ on ${\mathcal Q}_n^{n(d-2)*}$ the value $R({\mathbf f})$ depends only on the isomorphism class of the algebra $M_f$. As stated in the introduction, this is another fact that links Conjecture \ref{conj2} with the reconstruction problem. For other properties of the associated form and the morphism $\Phi$ the interested reader is referred to \cite[Section 2]{AI2} and \cite[Section 4]{F}. 
 
\section{The results}\label{S:contravariant}
\setcounter{equation}{0}

\subsection{Covariants and contravariants}
Recall that a polynomial $\Gamma \in\CC[\cQ_n^d\times\CC^n]$ is said to be a {\it covariant}\, of forms in $\cQ_n^d$ if for all $f\in\cQ_n^d$, $z\in\CC^n$ and $C\in\GL_n$ the following holds:
$$
\Gamma(f,z)=(\det C)^k\, \Gamma(C f,z\cdot C^t),
$$
where $k$ is an integer called the weight of\, $\Gamma$. Every homogeneous component of\, $\Gamma$ with respect to $z$ is automatically homogeneous with respect to $f$ and is also a covariant. Such covariants are called homogeneous and their degrees with respect to $f$ and $z$ are called the degree and order, respectively. We may consider a homogenous covariant $\Gamma$ of degree $K$ and order $D$ as the $\SL_n$-equivariant morphism
$$
\cQ_n^d  \to \cQ_n^D,\quad f  \mapsto (z \mapsto \Gamma(f,z))
 $$
 of degree  $K$ with respect to $f$, which maps a form $f\in\cQ_n^d$ to the form in $\cQ_n^D$ whose evaluation at $z$ is $\Gamma(f,z)$. Covariants independent of $z$ (i.e., of order $0$) are called {\it relative invariants}. Note that the discriminant $\Delta$ is a relative invariant of forms in $\cQ_n^d$ of weight $d(d-1)^{n-1}$ (see \cite[Chapter 13]{GKZ}).

Analogously, a polynomial $\Lambda \in\CC[\cQ_n^d\times \CC^{n*}]$ is said to be a {\it contravariant}\, of forms in $\cQ_n^d$  if for all $f\in\cQ_n^d$, $z^*=(z_1^*,\dots,z_n^*) \in \CC^{n*}$ and $C\in\GL_n$ one has
$$
\Lambda(f,z^*)=(\det C)^k\, \Lambda(C f,z^*\cdot C^{-1}),
$$
where $k$ is a (nonnegative) integer called the weight of\, $\Lambda$. Again, every contravariant splits into a sum of homogeneous ones, and for a homogeneous contravariant its degrees with respect to $f$ and $z^*$ are called the degree and class, respectively. 

We may consider a homogenous contravariant $\Lambda$ of degree $K$ and class $D$ as the $\SL_n$-equivariant morphism
$$
\cQ_n^d  \to \Sym^D(\CC^n),\quad  f  \mapsto (z^* \mapsto \Lambda(f,z^*))
$$
of degree $K$ with respect to $f$. Upon the standard identification $\Sym^D(\CC^n)=(\Sym^D (\CC^{n*}))^*=\cQ_n^{D*}$ induced by the polar pairing, this morphism can be regarded as a map from $\cQ_n^d$ to $\cQ_n^{D*}$. We will abuse notation by using the same symbol to denote both an element in $\CC[\cQ_n^d \times \CC^{n*}]$ and the corresponding morphism $\cQ_n^d \to \cQ_n^{D*}$.

\subsection{The contravariant arising from $\Phi$ and its degree}
Recall that the morphism $\Phi$ is a map
$$
\Phi \co X_n^d \to \cQ_n^{n(d-2)*}
$$
defined on the locus $X_n^d$ of nondegenerate forms.  The coefficients $\mu_{i_1, \ldots, i_n}$ that determine $\Phi$ (see (\ref{assocformexpppp}), (\ref{assocformexpp})) are elements of the coordinate ring $\CC[X_n^d] = \CC[\cQ_n^d]_{\Delta}$.  Let $p_{i_1,\dots,i_n}$ be the minimal integer such that $\Delta^{p_{i_1,\dots,i_n}}\cdot\mu_{i_1,\dots,i_n}$ is a regular function on ${\mathcal Q}_n^d$ (see formula (\ref{formulaformus})) and
$$
p:=\max\{p_{i_1,\dots,i_n}\mid i_1+\dots+i_n=n(d-2)\}.
$$
Then the product $\Delta^p \Phi$ is the morphism
$$
\Delta^p \Phi \co X_n^d \to \cQ_n^{n(d-2)*}, \quad f \mapsto \Delta(f)^p \Phi(f),
$$
which extends to a morphism from $\cQ_n^d$ to $\cQ_n^{n(d-2)*}$. We denote the extended map by the same symbol $\Delta^p \Phi$. 

Notice that, by Proposition \ref{equivariance}, the morphism 
$$
\Delta^p \Phi \co \cQ_n^d \to \cQ_n^{n(d-2)*}
$$
is in fact a homogeneous contravariant of weight $pd(d-1)^{n-1}-2$.  Since the class of $\Delta^p \Phi$ is $n(d-2)$, it follows that its degree is equal to $np(d-1)^{n-1}-n$. Observe that this last formula implies $p>0$ as the degree of a contravariant is always nonnegative.

We are now ready to state the first result of this paper.

\begin{theorem}\label{main} 
We have
\begin{equation}
p\le\left[\frac{n^{n-2}}{(n-1)!}\right],\label{estim}
\end{equation}
where $[x]$ denotes the largest integer that is less than or equal to $x$. Hence the degree of $\Delta^p \Phi$ does not exceed $n[n^{n-2}/(n-1)!](d-1)^{n-1}-n$.
\end{theorem}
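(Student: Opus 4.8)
The plan is to bound $p$ by the order of vanishing, along the hypersurface $\{\Delta=0\}$, of a single explicit polynomial coming out of the proof of Proposition \ref{regular}, and then to estimate that order of vanishing by a crude degree count. Recall that that proof produces, for a suitable choice of products $\mathbf e_k(f)=f_{r_k}\,{\tt m}_{s_k}$ forming a basis of $W_f$ as $f$ ranges over a dense Zariski-open subset $U\subseteq X_n^d$, the $N\times N$ matrix $A(f)$, where $N:=\dim_\CC\cQ_n^{n(d-2)}$, whose columns are the coordinate vectors of $\mathbf e_1(f),\dots,\mathbf e_{N-1}(f),H(f)$ in the standard monomial basis of $\cQ_n^{n(d-2)}$; Cramer's rule then gives $\mu_{i_1,\dots,i_n}=Q_{i_1,\dots,i_n}/D$ on $U$ for suitable $Q_{i_1,\dots,i_n}\in\CC[\cQ_n^d]$, with the common denominator $D:=\det A(f)$.

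The first step is to prove $p\le v_\Delta(D)$, where $v_\Delta(D)$ is the multiplicity of $\Delta$ in $D$. On the one hand, $D$ is a nonzero polynomial ($D$ does not vanish on $U$), homogeneous of degree $N-1+n$ in the coefficients of $f$: among the $N$ columns of $A(f)$, the $N-1$ columns $\mathbf e_k(f)$ have entries linear in $f$ while the Hessian column $H(f)$ has entries of degree $n$, so each term of $\det A(f)$ has degree $(N-1)+n$. On the other hand, fixing a multi-index and writing $\mu_{i_1,\dots,i_n}=P_{i_1,\dots,i_n}/\Delta^{p_{i_1,\dots,i_n}}$ with $p_{i_1,\dots,i_n}$ minimal, so that $\Delta\nmid P_{i_1,\dots,i_n}$, and comparing with the Cramer expression on the dense set $U$, one gets the polynomial identity $P_{i_1,\dots,i_n}\,D=Q_{i_1,\dots,i_n}\,\Delta^{p_{i_1,\dots,i_n}}$; since $\Delta$ is irreducible (see \cite[Chapter 13]{GKZ}) and $\CC[\cQ_n^d]$ is a UFD, this forces $\Delta^{p_{i_1,\dots,i_n}}\mid D$. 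Taking the maximum over multi-indices and comparing degrees, and using $\deg\Delta=n(d-1)^{n-1}$ (consistent with the weight $d(d-1)^{n-1}$ recorded earlier),
$$
p\ \le\ v_\Delta(D)\ \le\ \frac{\deg D}{\deg\Delta}\ =\ \frac{N-1+n}{n(d-1)^{n-1}}.
$$

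The second step is the purely elementary inequality $\dfrac{N-1+n}{n(d-1)^{n-1}}\le\dfrac{n^{n-2}}{(n-1)!}$ for all $n\ge 2$ and $d\ge 3$; since $p$ is an integer, this yields $p\le[n^{n-2}/(n-1)!]$, whence the stated bound on the degree $np(d-1)^{n-1}-n$ of $\Delta^p\Phi$. With $m:=n(d-1)\ge 2n$ and $\dim_\CC\cQ_n^{n(d-2)}=\binom{n(d-1)-1}{n-1}$, one has $(n-1)!\,N=\prod_{j=1}^{n-1}(m-j)$, so the inequality becomes $\prod_{j=1}^{n-1}(m-j)+(n-1)\cdot(n-1)!\le m^{n-1}$. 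Here $\prod_{j=1}^{n-1}(m-j)\le(m-1)^{n-1}$ and $x^{n-1}-y^{n-1}\ge(n-1)(x-y)y^{n-2}$ for $x\ge y\ge 0$ give $m^{n-1}-\prod_{j=1}^{n-1}(m-j)\ge(n-1)(m-1)^{n-2}$, and since $m-1\ge 2n-1\ge n$ and $(n-1)!\le n^{n-2}$ (immediate by induction) one has $(m-1)^{n-2}\ge n^{n-2}\ge(n-1)!$, which closes the estimate.

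The main obstacle is the first step: one must extract from the proof of Proposition \ref{regular} a single polynomial $D$ that is valid over a dense open set, has controllable degree, and whose $\Delta$-multiplicity bounds \emph{all} the $p_{i_1,\dots,i_n}$ at once — the irreducibility of $\Delta$ and unique factorization in $\CC[\cQ_n^d]$ are exactly what makes the passage from the Cramer denominator to $\Delta^{p_{i_1,\dots,i_n}}\mid D$ work. Everything else is bookkeeping and the elementary estimate, in which $n^{n-2}/(n-1)!$ appears precisely as the $d\to\infty$ limit of $(N-1+n)/\bigl(n(d-1)^{n-1}\bigr)$; this also makes it plausible that the bound is not sharp for $n\ge 4$, since in general $v_\Delta(D)<\deg D/\deg\Delta$ and $p$ may be strictly smaller than $v_\Delta(D)$.
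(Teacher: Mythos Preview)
Your proof is correct and follows essentially the same route as the paper: both extract from the Cramer argument in Proposition~\ref{regular} a common polynomial denominator $D$ of degree $\delta_1=N+n-1$, use irreducibility of $\Delta$ to get $p\cdot\deg\Delta\le\deg D$, and then prove the elementary inequality $\delta_1\le\dfrac{n^{n-2}}{(n-1)!}\,\delta_2$. The only difference is cosmetic: the paper proves the combinatorial bound via Pascal's identity and the estimate $\binom{n(d-1)}{n-1}\le (n(d-1))^{n-1}/(n-1)!$ together with $\binom{n(d-1)-1}{n-2}\ge n-1$, whereas you bound $\prod_{j=1}^{n-1}(m-j)\le(m-1)^{n-1}$ and control $m^{n-1}-(m-1)^{n-1}$ directly.
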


\begin{proof} We start as in the proof of Proposition \ref{regular} by solving system (\ref{keyeq}), which was done by utilizing Cramer's rule for any form $f$ lying in a Zariski open subset $U$ of $X_n^d$. Let, as before, $A(f)$ be the matrix of this system and $D(f):=\det A(f)$. We see from (\ref{keyeq}) that the entries of the first $N-1$ columns of the matrix $A(f)$ are linear functions of the coefficients of the form $f$, whereas the entries of the $N$th column are homogeneous polynomials of degree $n$ in these coefficients, where $N:=\dim_{\CC}\cQ_n^{n(d-2)}$. Therefore, $D$ is a homogeneous polynomial of degree
\begin{equation}
\delta_1:=N+n-1\label{delta1}
\end{equation}
on $\cQ_n^d$ nonvanishing on $U$. It then follows that the degree of the minimal denominator of $\mu_{i_1,\dots,i_n}$ does not exceed $\delta_1$.

At the same time, $\Delta^{p_{i_1,\dots,i_n}}\cdot\mu_{i_1,\dots,i_n}$ is a regular function on ${\mathcal Q}_n^d$ (recall that $p_{i_1,\dots,i_n}$ is the minimal integer with this property). It is well-known that $\Delta$ is an irreducible homogeneous polynomial of degree 
\begin{equation}
\delta_2:=n(d-1)^{n-1}\label{delta2}
\end{equation}
on ${\mathcal Q}_n^d$ (the irreducibility of $\Delta$ can be observed by considering an incidence variety as in \cite[p.~169]{Mu}). Therefore, the degree of the minimal denominator of $\mu_{i_1,\dots,i_n}$ is\linebreak $p_{i_1,\dots,i_n}\cdot\delta_2$, hence one has
\begin{equation}
p_{i_1,\dots,i_n}\cdot\delta_2\le\delta_1.\label{eq6}
\end{equation}

We now need a lemma:

\begin{lemma}\label{estimate}
We have
\begin{equation}
\delta_1\le \frac{n^{n-2}}{(n-1)!}\delta_2.\label{eq1}
\end{equation} 
\end{lemma}

\begin{proof} From formulas (\ref{dimform}), (\ref{delta1}) we find
\begin{equation}
\delta_1=\left(
\begin{array}{c}
n(d-1)-1\\
\vspace{-0.1cm}\\
n-1
\end{array}
\right)+n-1.\label{eq2}
\end{equation}
On the other hand, elementary combinatorics together with (\ref{delta2}) yields
\begin{equation}
\left(
\begin{array}{c}
n(d-1)\\
\vspace{-0.1cm}\\
n-1
\end{array}
\right)\le\frac{(n(d-1))^{n-1}}{(n-1)!}= \frac{n^{n-2}}{(n-1)!}\delta_2\label{eq3}
\end{equation}
and
\begin{equation}
\left(
\begin{array}{c}
n(d-1)\\
\vspace{-0.1cm}\\
n-1
\end{array}
\right)=
\left(
\begin{array}{c}
n(d-1)-1\\
\vspace{-0.1cm}\\
n-1
\end{array}
\right)+
\left(
\begin{array}{c}
n(d-1)-1\\
\vspace{-0.1cm}\\
n-2
\end{array}
\right).\label{eq4}
\end{equation}
It is then clear that inequality (\ref{eq1}) follows from (\ref{eq2}), (\ref{eq3}), (\ref{eq4}) provided we show that
\begin{equation}
\left(
\begin{array}{c}
n(d-1)-1\\
\vspace{-0.1cm}\\
n-2
\end{array}
\right)\ge n-1.\label{eq5}
\end{equation}

For $n=2$ inequality (\ref{eq5}) is obvious, and we assume that $n\ge 3$. Then, as $n-2>0$, we have
$$
\left(
\begin{array}{c}
n(d-1)-1\\
\vspace{-0.1cm}\\
n-2
\end{array}
\right)\ge
n(d-1)-1\ge n-1, 
$$
which establishes (\ref{eq5}) and concludes the proof of the lemma.\end{proof}

The theorem now follows by combining inequalities (\ref{eq6}) and (\ref{eq1}).\end{proof}

Notice that the expression
$$
\frac{n^{n-2}}{(n-1)!}\delta_2=\frac{(n(d-1))^{n-1}}{(n-1)!}
$$
in the right-hand side of (\ref{eq1}) is simply the leading term in $\delta_1$ at infinity with respect to the parameter $(d-1)$ (see (\ref{eq2})). Thus, the content of Lemma \ref{estimate} is that the value of $\delta_1$ is controlled from above by its asymptotic behavior for {\it every}\, $d$. 

\subsection{Estimates for the cases $n=4,5$}

Observe that for $n=2,3$ upper bound (\ref{estim}) yields $p=1$, which is the result of \cite[Proposition 3.1]{AIK}. However, (\ref{estim}) is not sharp in general. Specifically, in the following two propositions we investigate the cases $n=4$, $n=5$ and find that for sufficiently small values of $d$ estimate (\ref{estim}) can be improved. 

Indeed, if $n=4$ inequality (\ref{estim}) yields $p\le 2$, whereas in fact the following holds:

\begin{proposition}\label{n=4}
For $n=4$ one has
$$
\begin{array}{ll}
p=1 & \hbox{if $3\le d\le 6$,}\\
\vspace{-0.1cm}\\
p\le 2 & \hbox{if $d\ge 7$.}
\end{array}
$$
\end{proposition}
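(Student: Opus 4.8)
The plan is to refine the counting argument behind Theorem~\ref{main} by being more economical about the matrix $A(f)$ whose determinant $D(f)$ serves as a denominator for the $\mu_{i_1,\dots,i_n}$. In the proof of Theorem~\ref{main} one bounds the degree of $D$ crudely by $\delta_1=N+n-1$, using that $N-1$ columns carry linear entries and one column carries degree-$n$ entries; this gives $p\cdot\delta_2\le\delta_1$ and hence $p\le[n^{n-2}/(n-1)!]$, which for $n=4$ reads $p\le 2$. To get $p=1$ for small $d$ it suffices to exhibit, for each $d$ with $3\le d\le 6$, a denominator for every $\mu_{i_1,\dots,i_n}$ of degree at most $\delta_2=n(d-1)^{n-1}$. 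In other words I want to show $\delta_1\le\delta_2$ when $n=4$ and $d\le 6$, or failing that, replace $D(f)$ by a better polynomial.

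First I would simply compute $\delta_1$ and $\delta_2$ for $n=4$. Here $N=\dim\cQ_4^{4(d-2)}=\binom{4(d-2)+3}{3}$ and $\delta_2=4(d-1)^3$. A direct check gives: for $d=3$, $N=\binom{7}{3}=35$, $\delta_1=38$, $\delta_2=32$; for $d=4$, $N=\binom{11}{3}=165$, $\delta_1=168$, $\delta_2=108$; and the gap only widens for $d=5,6$. So the naive bound $p\cdot\delta_2\le\delta_1$ does \emph{not} by itself yield $p=1$, and the improvement must come from a sharper denominator than $D(f)=\det A(f)$. The natural candidate is a power of $\Delta$ itself: by formula~(\ref{formulaformus}) the minimal denominator of $\mu_{i_1,\dots,i_n}$ is $\Delta^{p_{i_1,\dots,i_n}}$, and since $\mu_{i_1,\dots,i_n}\cdot\Delta^{p_{i_1,\dots,i_n}}$ is regular on $\cQ_4^d$ while $\mu_{i_1,\dots,i_n}\cdot D$ is regular on $U$, unique factorization in $\CC[\cQ_4^d]$ forces $\Delta^{p_{i_1,\dots,i_n}}$ to divide a suitable power of $D$ up to units supported off $X_4^d$; more to the point, $p_{i_1,\dots,i_n}$ is exactly the order of vanishing of $\mu_{i_1,\dots,i_n}$ along the discriminant hypersurface $\{\Delta=0\}$. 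So the real task is a \emph{local} one: estimate how badly $\mu_{i_1,\dots,i_n}$ can blow up as $f$ approaches a generic point of the discriminant, i.e. a form with a single ordinary double point (an $A_1$ singularity) away from the origin — equivalently, a generic smooth point of $\{\Delta=0\}$.

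The key step, then, is the following. Along a generic point of $\{\Delta=0\}$ the Milnor algebra $M_f$ degenerates in a controlled way — the Milnor number jumps by one and the extra socle element is accounted for by a simple local model — and I would track the pole order of the coefficients $\mu_{i_1,\dots,i_n}$ expressed via Cramer's rule through this degeneration. Concretely, on the Zariski-open $U$ we have $\mu_{i_1,\dots,i_n}(f)=\Delta_{i_1,\dots,i_n}(f)/D(f)$ with $\Delta_{i_1,\dots,i_n}$ the appropriate Cramer minor; the order of the pole along $\{\Delta=0\}$ is $\mathrm{ord}_\Delta(D)-\mathrm{ord}_\Delta(\Delta_{i_1,\dots,i_n})$, and I expect to show this difference is at most $1$ for $d\le 6$ by a careful analysis of the rank drop of $A(f)$ at a generic discriminant point (it drops by exactly one, contributing a single factor of $\Delta$) together with the observation that the relevant Cramer minors do not vanish identically there. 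For $d\ge 7$ the same local analysis, now allowing the cruder global degree bound to take over, still gives $p\le 2$ from Theorem~\ref{main} directly, so nothing new is needed in that range beyond quoting~(\ref{estim}).

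\textbf{Main obstacle.} The delicate part is the local computation at a generic point of the discriminant: one must show that the pole order of $\mu_{i_1,\dots,i_n}$ there is exactly $1$ and not $2$ for all $(i_1,\dots,i_n)$ and all $d\in\{3,4,5,6\}$. This requires either an explicit normal-form calculation of the associated form for a form acquiring a single node — using the semicontinuity of the socle and the fact that the Hessian still spans it — or a clever choice of a better denominator polynomial whose vanishing order along $\{\Delta=0\}$ is provably $1$; I anticipate the node computation is the cleanest route, reducing to a one-variable local analysis near the node where the jump in $M_f$ is governed by a single quadratic factor. The lower bound $p\ge 1$ is free, since as noted after the statement of Theorem~\ref{main} the degree of a contravariant is nonnegative and equals $np(d-1)^{n-1}-n$, so $p=0$ is impossible.
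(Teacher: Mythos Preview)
Your proposal contains a genuine gap, but of a pleasant kind: you have overlooked an elementary observation that makes the entire local analysis at a generic discriminant point unnecessary. You correctly recall from the proof of Theorem~\ref{main} that $p_{i_1,\dots,i_n}\cdot\delta_2\le\delta_1$, and you correctly note that the minimal denominator of $\mu_{i_1,\dots,i_n}$ is $\Delta^{p_{i_1,\dots,i_n}}$, so its degree is $p_{i_1,\dots,i_n}\cdot\delta_2$. But $p_{i_1,\dots,i_n}$ is an \emph{integer}. Therefore to conclude $p\le 1$ you do not need $\delta_1\le\delta_2$; the weaker inequality $\delta_1<2\delta_2$ already forces $p_{i_1,\dots,i_n}<2$, hence $p_{i_1,\dots,i_n}\le 1$ for every multi-index. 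This is exactly the paper's argument: one simply checks for which $d$ the inequality $\delta_1<2\delta_2$ holds when $n=4$, which reduces to a cubic inequality in $d$ satisfied precisely for $3\le d\le 6$.

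Your own numbers confirm this. For $d=3$ you found $\delta_1=38$ and $\delta_2=32$, so $2\delta_2=64>38$; for $d=4$, $\delta_1=168$ and $2\delta_2=216>168$; similarly $d=5$ gives $\delta_1=458<512=2\delta_2$ and $d=6$ gives $\delta_1=972<1000=2\delta_2$. At $d=7$ one has $\delta_1=1774>1728=2\delta_2$, so the argument breaks down and one falls back on Theorem~\ref{main} to get $p\le 2$; Remark~\ref{exact1} in the paper notes explicitly that this method cannot do better for $d\ge 7$. The node degeneration analysis you sketch might well yield sharper information, but it is not what the paper does and is not needed for the proposition as stated.
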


\begin{proof} Let us determine the values of $d$ for which $\delta_1<2\delta_2$. By expanding formulas (\ref{delta2}) and (\ref{eq2}) for $n=4$, we see that this inequality is equivalent to
\begin{equation}
4d^3-36d^2+71d-36< 0.\label{eq7}
\end{equation}
Set $F(t):=4t^3-36t^2+71t-36$ and write the left-hand side of (\ref{eq7}) as 
$$
F(d)=d(4d^2-36d+71)-36.  
$$
The expression $4d^2-36d+71$ is easily seen to be negative for $3\le d\le 6$, thus $F(d)<0$ for these values of $d$ as well. On the other hand, one calculates $F(7)>0$ and observes that $F'(t)=12t^2-72t+71$ is positive for $t\ge 7$, hence $F(d)>0$ for all $d\ge 7$.

Thus, we have shown that (\ref{eq7}) holds precisely for $3\le d\le 6$. For these values of $d$ inequality (\ref{eq6}) yields $p=1$, which completes the proof. \end{proof}

\begin{remark}\label{exact1}
The proof of Proposition \ref{n=4} shows that the bound $p\le 2$ cannot be improved for any $d\ge 7$ if one relies just on (\ref{eq6}) and the inequality $\delta_1<2\delta_2$.   
\end{remark}

Next, we will investigate the case $n=5$, which is computationally more demanding. In this situation, inequality (\ref{estim}) yields $p\le 5$, but we will obtain the following more precise bounds:

\begin{proposition}\label{n=5}
For $n=5$ one has
$$
\begin{array}{ll}
p=1 & \hbox{if $d=3$,}\\
\vspace{-0.1cm}\\
p\le 2 & \hbox{if $d=4$,}\\
\vspace{-0.1cm}\\
p\le 3 & \hbox{if $5\le d\le 8$,}\\
\vspace{-0.1cm}\\
p\le 4 & \hbox{if $9\le d\le 50$,}\\
\vspace{-0.1cm}\\
p\le 5 & \hbox{if $d\ge 51$.}\\
\vspace{-0.1cm}\\
\end{array}
$$
\end{proposition}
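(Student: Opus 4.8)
The plan is to repeat, for $n=5$, the reasoning used to prove Proposition~\ref{n=4}, the crux being inequality \eqref{eq6}: since $p_{i_1,\dots,i_n}\delta_2\le\delta_1$ for every multi-index with $i_1+\dots+i_n=5(d-2)$, one has $p\le\delta_1/\delta_2$, so $p\le k$ as soon as $\delta_1<(k+1)\delta_2$. First I would substitute $n=5$ into \eqref{delta2} and \eqref{eq2}, obtaining $\delta_2=5(d-1)^4$ and $\delta_1=\binom{5d-6}{4}+4$. Writing $u:=d-1$ and expanding the binomial coefficient, this becomes
\[
\delta_1=\frac{625u^4-1250u^3+875u^2-250u+120}{24},\qquad \delta_2=5u^4,
\]
so that for an integer $k\ge1$ the condition $\delta_1<k\delta_2$ is equivalent to the quartic inequality $g_k(u):=(625-120k)u^4-1250u^3+875u^2-250u+120<0$, where the leading coefficient is $385,265,145,25$ for $k=2,3,4,5$ respectively.

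Next I would analyse $g_k$ for $k=2,3,4,5$. The efficient route is to first show that $r(d):=\delta_1/\delta_2$ is strictly increasing on $\{d\ge3\}$: differentiating $r$ with respect to $u$ and clearing denominators reduces the claim to the positivity of the cubic $125u^3-175u^2+75u-48$ for $u\ge2$, which holds since it equals $402$ at $u=2$ and has positive derivative there and beyond. Granting monotonicity of $r$, each threshold is located by a single pair of exact evaluations: $r(3)=130/80<2$ while $r(4)=1005/405>2$; $r(4)<3$ while $r(5)=3880/1280>3$; $r(8)=46380/12005<4$ while $r(9)=82255/20480>4$; and $r(50)=144084505/144120025<5$ while $r(51)=156340630/156250000>5$. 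Combining these with \eqref{eq6} (and the general fact $p>0$) yields $p=1$ for $d=3$, $p\le2$ for $d=4$, $p\le3$ for $5\le d\le8$, and $p\le4$ for $9\le d\le50$.

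Finally, for $d\ge51$ nothing further is needed: Theorem~\ref{main} already gives $p\le[5^{3}/4!]=[125/24]=5$; equivalently, $r(d)<125/24<6$ for every $d$ by Lemma~\ref{estimate}. The main obstacle here is purely computational and heavier than in Proposition~\ref{n=4}: the inequalities $g_k(u)<0$ are now of degree $4$ rather than $3$, and pinning down the last threshold amounts to comparing $\delta_1$ with $5\delta_2$ at $d=50$ and $d=51$, where these integers differ by only a few parts in $10^4$ of their size (for instance $5\delta_2-\delta_1=35520$ against $\delta_1\approx1.44\times10^8$ at $d=50$), so the bound must be established by exact integer arithmetic rather than by the asymptotics underlying Lemma~\ref{estimate}.
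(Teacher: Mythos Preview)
Your proof is correct and follows the same strategy as the paper: both arguments rest entirely on \eqref{eq6} and on locating, for each $k$, the range of $d$ for which $\delta_1<k\delta_2$. The paper does this by expanding, for $n=5$, four separate quartics $F_1,\dots,F_4$ in $d$ (corresponding to $k=5,4,3,2$) and analysing their signs one at a time via value checks and first/second derivatives. Your route is a mild but genuine streamlining: once you establish that $r(d)=\delta_1/\delta_2$ is strictly increasing for $d\ge 3$ (which reduces to the positivity of a single cubic in $u=d-1$), all four thresholds are pinned down by one pair of exact evaluations each, rather than by four independent sign analyses. The payoff is mainly organisational---less case-work---while the paper's version has the advantage of being entirely self-contained at each step.

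One small slip to fix: in your evaluations at $d=50$ and $d=51$ you have written the fractions $144084505/144120025$ and $156340630/156250000$ and labelled them as $r(50)$ and $r(51)$, but these denominators are $5\delta_2$, not $\delta_2$. What you are actually (correctly) checking is $\delta_1<5\delta_2$ at $d=50$ and $\delta_1>5\delta_2$ at $d=51$; either rewrite these as the inequalities $144084505<144120025$ and $156340630>156250000$, or divide the denominators by $5$ so that the displayed ratios really are $r(50)\approx 4.9988<5$ and $r(51)\approx 5.0029>5$.
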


\begin{proof} First, let us determine the values of $d$ for which $\delta_1<5\delta_2$. As in the proof of Proposition \ref{n=4}, by expanding formulas (\ref{delta2}) and (\ref{eq2}) for $n=5$, we see that this inequality is equivalent to
\begin{equation}
5d^4-270d^3+955d^2-1170d+504<0.\label{eq8}
\end{equation}
Set $F_1(t):=5t^4-270t^3+955t^2-1170t+504$ and write the left-hand side of (\ref{eq8}) as 
$$
F_1(d)=d^2(5d^2-270d+955)-1170d+504.  
$$
The expression $5d^2-270d+955$ is immediately seen to be negative for $4\le d\le 50$, thus, as $-1170d+504<0$ for all $d\ge 3$, it follows that $F_1(d)<0$ for $4\le d\le 50$. Also, one computes $F_1(3)<0$, and therefore we have $F_1(d)<0$ for all $3\le d\le 50$. On the other hand, one calculates $F_1(51)>0$, $F_1'(51)>0$ and observes that $F_1''(t)=60t^2-1620t^2+1910$ is positive for $t\ge 51$, hence $F_1(d)>0$ for all $d\ge 51$. Thus, we have proved that (\ref{eq8}) holds precisely for $3\le d\le 50$. For these values of $d$ inequality (\ref{eq6}) yields $p\le 4$.

Next, let us find the values of $d$ for which $\delta_1<4\delta_2$. As above, by expanding formulas (\ref{delta2}) and (\ref{eq2}) for $n=5$, we see that this inequality is equivalent to
\begin{equation}
29d^4-366d^3+1099d^2-1266d+528<0.\label{eq9}
\end{equation}
Set $F_2(t):=29t^4-366t^3+1099t^2-1266t+528$ and write the left-hand side of (\ref{eq9}) as 
$$
F_2(d)=d^2(29d^2-366d+1099)-1266d+528. 
$$
The expression $29d^2-366d+1099$ is negative for $5\le d\le 7$, thus $F_2(d)<0$ for these values of $d$ as well. Also, one computes $F_2(3)<0$, $F_2(4)<0$, $F_2(8)<0$ and therefore $F_2(d)<0$ for all $3\le d\le 8$. On the other hand, $F_2(9)>0$, $F_2'(9)>0$ and it is not hard to observe that $F_2''(t)=348t^2-2196t+2198$ is positive for $t\ge 9$, hence $F_2(d)>0$ for all $d\ge 9$. We have thus shown that (\ref{eq9}) holds precisely for $3\le d\le 8$. For these values of $d$ inequality (\ref{eq6}) leads to the bound $p\le 3$.

Further, let us determine the values of $d$ for which $\delta_1<3\delta_2$. For $n=5$ this inequality is equivalent to
\begin{equation}
53d^4-462d^3+1243d^2-1362d+552<0.    \label{eq10}
\end{equation}
Set $F_3(t):=53t^4-462t^3+1243t^2-1362t+552$ and observe that $F_3(3)<0$, $F_3(4)<0$. On the other hand, $F_3(5)>0$, $F_3'(5)>0$ and we see that $F_3''(t)=636t^2-2772t+2486$ is positive for $t\ge 5$, therefore $F_3(d)>0$ for all $d\ge 5$. Thus, (\ref{eq10}) holds precisely for $d=3,4$. For these values of $d$ inequality (\ref{eq6}) implies $p\le 2$.

Finally, let us find the values of $d$ for which $\delta_1<2\delta_2$. For $n=5$ this inequality is equivalent to
\begin{equation}
77d^4-558d^3+1387d^2-1458d+576<0.      \label{eq11}
\end{equation}
Set $F_4(t):=77t^4-558t^3+1387t^2-1458t+576$ and notice that $F_4(3)<0$. On the other hand, $F_4(4)>0$, $F_4'(4)>0$ and one observes that $F_4''(t)=924t^2-3348t+2774$ is positive for $t\ge 4$, hence $F_4(d)>0$ for all $d\ge 4$. Therefore, (\ref{eq11}) holds precisely for $d=3$. For this value of $d$ inequality (\ref{eq6}) yields $p=1$. 

The proof is now complete. \end{proof}

\begin{remark}\label{exact2}
As in remark \ref{exact1}, we note that the bounds obtained in Proposition \ref{n=5} cannot be improved if one relies just on (\ref{eq6}) and the inequalities $\delta_1<k\delta_2$ for $k=2,3,4,5$.   
\end{remark}

\begin{remark}\label{d=3}
One may get the impression that the method of proof of Propositions \ref{n=4}, \ref{n=5} always yields that $p=1$ if $d=3$.  In fact, this is not the case as the example of $n=6$ shows. Indeed, for $n=6$, $d=3$ the above approach only leads to the bound $p\le 2$.
\end{remark}

The analysis carried out in the proofs of Propositions \ref{n=4}, \ref{n=5} can be extended, in principle, to the case of any $n$. Indeed, both $\delta_1$ and $\delta_2$ may be viewed as polynomials of degree $n-1$ in $d$, and one may try to explicitly determine the values of $d$ for which the difference $k\delta_2-\delta_1$ is positive, where $k=2,\dots,[n^{n-2}/(n-1)!]$. However, an analysis of this kind appears to be computationally quite challenging to perform in full generality, and we did not attempt to do so.

\end{document}